\documentclass{article}
\usepackage{amsfonts,amsmath,amsthm,amssymb}
\usepackage{graphics, epsfig}
\usepackage{color}
\usepackage{appendix}
\usepackage{ulem}
\usepackage[makeroom]{cancel}
\usepackage{fancyhdr}
\usepackage{centernot}
\usepackage{mathtools}
\usepackage{ stmaryrd }

 \usepackage[usenames,dvipsnames]{pstricks}
 \usepackage{pst-grad} 
 \usepackage{pst-plot} 
\allowdisplaybreaks

\let\TeXchi\chi
\newbox\chibox
\setbox0 \hbox{\mathsurround0pt $\TeXchi$}
\setbox\chibox \hbox{\raise\dp0 \box 0 }
\def\chi{\copy\chibox}


\newtheorem{proposition}{Proposition}[section]
\newtheorem{theorem}{Theorem}[section]
\newtheorem{definition}{Definition}[section]

\newtheorem{remark}{Remark}[section]


\numberwithin{equation}{section}
\numberwithin{theorem}{section}
\numberwithin{definition}{section}
\numberwithin{example}{section}
\numberwithin{proposition}{section}
\numberwithin{lemma}{section}
\numberwithin{remark}{section}
\setcounter{secnumdepth}{3}
\DeclareMathOperator{\Ima}{Im}

\DeclareMathOperator{\Hom}{Hom}
\DeclareMathOperator{\rk}{rk}
\newcommand\blfootnote[1]{%
  \begingroup
  \renewcommand\thefootnote{}\footnote{#1}%
  \addtocounter{footnote}{-1}%
  \endgroup
}
\pagestyle{fancy}
\fancyhf[HL]{Manuel Norman}
\fancyhf[HR]{(Co)homology theories for ... from their corresponding poset}
\begin{document}
\title{(Co)homology theories for structured spaces arising from their corresponding poset}
\author
{Manuel Norman}
\date{}
\maketitle
\begin{abstract}
\noindent In [1] we introduced the notion of 'structured space', i.e. a space which locally resembles various algebraic structures. In [2] and [3] we studied some cohomology theories related to these space. In this paper we continue in this direction: while in [2] we mainly focused on cohomologies arising from $f_s$ and $h$, and in [3] we considered cohomologies for generalisations of objects which involved structured spaces, here we deal with (co)homologies coming from the poset associated to a structured space via an equivalence relation defined at the end of Section 4 in [1]. More precisely, we will show that various (co)homologies for posets can also be applied (under some assumptions) to structured spaces.
\end{abstract}
\blfootnote{Author: \textbf{Manuel Norman}; email: manuel.norman02@gmail.com\\
\textbf{AMS Subject Classification (2010)}: 06A11, 55U10, 55N35\\
\textbf{Key Words}: structured space, poset, (co)homology}
\section{Introduction}
The main idea of this paper is to assign to a structured space some cohomologies which arise from the poset given by the relation $\sim$ in Section 4 of [1]. We have already found various cohomology theories related to these spaces, more precisely:\\
$\bullet$ In [2] we have developed two cohomology theories starting from $f_s$ and $h$ (these were suggested by the "similarity" of sheaves, vector bundles and the two maps above, as explained in the cited article)\\
$\bullet$ In [3] we have generalised various notions and we have developed the 'structured versions' of their cohomology theories\\
We now want to analyse structured spaces from another point of view: as we have seen in Section 4 of [1], the map $h$ is defined as follows (here $x \in X$, with $(X,f_s)$ structured space):
\begin{equation}\label{Eq:1.1}
x \xmapsto{h} \lbrace U_p \in \mathcal{U} : x \in U_p \rbrace \subseteq \mathcal{U}
\end{equation}
where $\mathcal{U}$ is, as usual (see [1]), the domain of the structure map $f_s$. The collection of all the families of subcollection of $\mathcal{U}$, excluding the empty collection, will be denoted, as in [1], by $\mathcal{L}$. In Section 4 of [1] we proved an important result on structured spaces (see Theorem 4.1), which asserts that, if $h: X \rightarrow \mathcal{L}$ satisfies some properties, then there is a semilattice $X/ \sim$ (or a lattice) corresponding to $X$ (we also proved the converse implication). Here, we will actually only need the previous part: $X / \sim$ is a poset (even if $h$ is not surjective or meet) under $\leq$, where we define:
\begin{equation}\label{Eq:1.2}
x \leq y \Leftrightarrow h(x) \subseteq h(y)
\end{equation}
and
\begin{equation}\label{Eq:1.3}
x \sim y \Leftrightarrow h(x) = h(y)
\end{equation}
(so $\leq$ is actually $<$). Indeed, it is not difficult to see that $\leq$ is a partial order on $X / \sim$. We will say that $X/ \sim$ is the 'poset corresponding/associated to the structured space $X$'. We will associate to $X$ some cohomology theories related to its corresponding poset, and the obtained cohomologies will be called '\textit{structured poset cohomologies}'. In each of the following sections, we will briefly recall some fundamental facts about the cohomologies for posets that we will need to consider; then, we will relate them with the structured space $X$.
\section{Simplicial (co)homology}
We refer to Chapter 4 in [14] for simplicial homology. For simplicial (co)homology of posets, we mainly refer to Section 1.5 and 1.6 of [9], and also to [17]. Briefly, given a poset $P$, we assign it its \textit{order complex} $\Delta(P)$, which is the abstract simplicial complex defined as follows:\\
$\bullet$ The vertices of $\Delta(P)$ are the points belonging to $P$\\
$\bullet$ The faces of $\Delta(P)$ are the totally ordered subsets (usually called 'chains') of $P$\\
For instance, $ \lbrace x, y \rbrace$ is a face of $\Delta(P)$, whichever are $x, y \in P$ such that $x \leq y$.\\
The simplicial (co)homology of a poset $P$ is the simplicial (co)homology of its order complex. This means that the chain complex we consider consists of the $\mathbb{K}$-modules freely generated by $n$-chains of $P$, that is, $C_n(P,\mathbb{K})$ ($\mathbb{K}$ is the ring of coefficients; we may take, for instance, $\mathbb{K}=\mathbb{Z}$). We recall that an $n$-chain is the following finite formal sum:
$$ \sum_{i=1}^{s} a_i \sigma_i $$
where $a_i \in \mathbb{K}$ and $\sigma_i$ is an oriented $n$-simplex. The boundary $\partial_n (\sigma)$ of an oriented $n$-simplex $\sigma=(t_0,t_1,...,t_n)$ is defined as follows:
$$ \partial_n (\sigma):= \sum_{i=0}^{n} (-1)^i (t_0,...,\widehat{t}_i, ..., t_n) $$
where as usual the cap $\widehat{t}_i$ means that we delete the vertex $t_i$ (so, $(t_0,...,\widehat{t}_i, ..., t_n)$ is the $i$-th face of $\sigma$). The map
$$ \partial_n : C_n(P,\mathbb{K}) \rightarrow C_{n-1}(P,\mathbb{K}) $$
is a homomorphism and $\partial_n \circ \partial_{n+1}=0$. So we have a chain complex for the order complex $\Delta(P)$, and thus we have associated a homology theory to the poset $P$. As usual, the homology groups are:
$$ H_n(P,\mathbb{K})=\ker \partial_n / \Ima \partial_{n+1} $$
Then, a simple and usual way to obtain the cohomology groups is the application of the functor $\Hom(\cdot, G)$, for some $\mathbb{K}$-module with identity $G$. See also [9] and [15]. Now we only have to apply this to our particular case $P=X/ \sim$.
\begin{theorem}\label{Thm:2.1}
Let $X$ be a structured space, and define the equivalence relation $\sim$ as in \eqref{Eq:1.3}. Then, $X / \sim$ is a partially ordered set (i.e. a poset) under $\leq$ (defined in \eqref{Eq:1.2}). We can thus consider the order complex of this poset, $\Delta(X / \sim)$, and evaluate its simplicial (co)homology. The resulting (co)homology theory is then called 'structured simplicial (co)homology' of $X$.
\end{theorem}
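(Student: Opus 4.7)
The statement is essentially definitional, so the plan is to carry out the two verifications that are actually required and then note that everything else is a direct invocation of standard constructions already recalled in Section 2. First I would verify that the relation $\leq$ from \eqref{Eq:1.2} descends to a well-defined relation on the quotient $X/\sim$. Concretely, if $x\sim x'$ and $y\sim y'$, then by \eqref{Eq:1.3} we have $h(x)=h(x')$ and $h(y)=h(y')$, so $h(x)\subseteq h(y)$ holds if and only if $h(x')\subseteq h(y')$; thus the truth value of $x\leq y$ depends only on the equivalence classes $[x]$ and $[y]$.

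Next I would check the three axioms of a partial order on $X/\sim$. Reflexivity is immediate from $h(x)\subseteq h(x)$. For antisymmetry, if $[x]\leq[y]$ and $[y]\leq[x]$, then $h(x)\subseteq h(y)$ and $h(y)\subseteq h(x)$, whence $h(x)=h(y)$, i.e.\ $x\sim y$, so $[x]=[y]$. Transitivity follows from the transitivity of inclusion: $h(x)\subseteq h(y)\subseteq h(z)$ forces $h(x)\subseteq h(z)$. Observe that none of these steps requires $h$ to be surjective or meet-preserving, consistently with the remark in the Introduction.

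Once $X/\sim$ is known to be a poset, the order complex $\Delta(X/\sim)$ is automatically a well-defined abstract simplicial complex via the recipe recalled at the beginning of Section 2: its vertices are the classes $[x]\in X/\sim$ and its faces are the finite totally ordered subsets (chains) under $\leq$. The chain modules $C_n(X/\sim,\mathbb{K})$, the boundary maps $\partial_n$ with $\partial_n\circ\partial_{n+1}=0$, and hence the homology $H_n(X/\sim,\mathbb{K})$ are then obtained by direct transport of the general definitions; cohomology is obtained by applying $\Hom(\cdot,G)$ as indicated in the text. Naming this (co)homology the \emph{structured simplicial (co)homology} of $X$ completes the statement.

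There is no substantive obstacle here; the only point requiring care is that all three order-axioms be verified on representatives independently of the choice of representative, which is precisely what the well-definedness step above ensures. Everything beyond that is the standard simplicial (co)homology machinery referenced in [9], [14], [15] and [17], applied to the particular poset $P=X/\sim$.
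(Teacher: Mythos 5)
Your proposal is correct and follows the only natural route: the paper itself offers no written proof of Theorem \ref{Thm:2.1} (it merely asserts in the Introduction that ``it is not difficult to see that $\leq$ is a partial order on $X/\sim$''), and your verification of well-definedness on the quotient together with reflexivity, antisymmetry and transitivity of $\subseteq$ is exactly the routine check being implicitly invoked. The remaining content (order complex, chain modules, $\partial_n\circ\partial_{n+1}=0$, and $\Hom(\cdot,G)$ for cohomology) is, as you say, a direct specialisation of the standard machinery recalled in Section 2 to $P=X/\sim$.
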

\begin{remark}\label{Rm:2.1}
\normalfont We note that, in general, if there exists a maximal element in the poset $P$, it is also useful to consider the Folkman complex of the poset, where such element is deleted (i.e. we consider $P \setminus 1$, where $1$ indicates the maximal element). Something similar can be done with the minimal element (usually denoted by $0$). This "cutting process" often simplifies the poset, and could allow simpler evaluations of the (co)homologies. We refer to Chapter 4 in [18] and to Chapter 4.5 in [20] for more details.\\
Moreover, we also notice that it is possible to obtain another (co)homology theory for structured spaces which is again related to posets and simplicial (co)homology. More precisely, apply Theorem 2.1 in [22] to a structured space $X$ (which is, by definition, a topological space): this (co)homology does not involve anymore $\leq$ and $\sim$ defined in \eqref{Eq:1.1} and \eqref{Eq:1.2}; however, as explained in [22], the two (different) equivalence relations are, in some sense, similar in their construction.
\end{remark}
\section{Cohomologies with coefficients in a presheaf}
Our main reference for this section is [11]. A poset $P$ can be seen as a category whose objects are the elements of $P$ and with the morphism $x \rightarrow y$ iff $x \leq y$. Consider a presheaf $\mathcal{F}:P^{op} \rightarrow \textbf{Ab}$ (we may sometimes drop the notation $P^{op}$, writing more briefly $P$: this should not cause confusion, since the domain of the presheaf certainly is the category $P^{op}$, not the topological space itself). The higher limits of $\mathcal{F}$ are defined as the right derived functors of the limit:
\begin{equation}\label{Eq:3.1}
\lim^i _{\longleftarrow P} := R^i \lim_{\longleftarrow P}
\end{equation}
We define the cohomology groups of $P$ with values in $\mathcal{F}$ as follows:
\begin{equation}\label{Eq:3.2}
H^p(P, \mathcal{F}):= \lim^p _{\longleftarrow P} \mathcal{F}
\end{equation}
This clearly specialises to the case $P=X/ \sim$.\\
We now consider cellular cohomology. Again, our main reference is [11]. Letting $x \leq z$ in the poset $P$, if for any $x \leq y \leq z$ we have either $y=x$ or $y=z$, then we write $x \prec z$. A poset $P$ is graded if there is a function $\rk : P \rightarrow \mathbb{Z}$ such that ($x < y$ means that $x \leq y$ and $x \neq y$):\\
(i) $x < y \Rightarrow \rk(x) < \rk(y)$\\
(ii) $x \prec y \Rightarrow \rk(y)=\rk(x) + 1$\\
Such a function is called a 'rank function'. If we suppose that some fixed $\rk$ is bounded above, say $r:=\max_P \rk(x)$, we can define the corank function $| \cdot | : P \rightarrow \mathbb{Z}$  by $|x|:= r - \rk(x)$. We now prove that, under some assumptions, $P=X/ \sim$ satisfies all the previous conditions.
\begin{proposition}\label{Prop:3.1}
Let $(X,f_s)$ be a structured space and suppose that $h$ is surjective onto $\mathcal{L}$. Moreover, assume that $\mathcal{U}$ is finite (i.e. it consists of a finite number of sets). Then, the function $\rk(x):=|h(x)|$ (the cardinality of the collection, defined as for sets; see also Section 4 of [2]) is a rank function, $r:=\max_{X / \sim} \rk(x) = 2^{|\mathcal{U}|}-1$, and thus $X/ \sim$ is a graded poset with a corank function.
\end{proposition}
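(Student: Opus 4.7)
The plan is to verify the two defining properties of a rank function directly, using (a) finiteness of $\mathcal{U}$ to convert proper inclusions into strict inequalities of cardinalities, and (b) surjectivity of $h$ to supply an ``intermediate'' class between $[x]$ and $[y]$ whenever the ranks differ by more than one. Before anything else I would check that $\rk(x) := |h(x)|$ descends to a well-defined function on $X/\sim$: if $x \sim y$ then $h(x) = h(y)$ by \eqref{Eq:1.3}, so the cardinalities agree.

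Condition (i) is then immediate: if $x < y$ in $X/\sim$, then by \eqref{Eq:1.2} together with $x \not\sim y$ we have $h(x) \subsetneq h(y)$, and since $\mathcal{U}$ (hence $h(y)$) is finite, proper containment forces $|h(x)| < |h(y)|$.

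The main content of the proposition is condition (ii), and this is exactly where surjectivity of $h$ will be used in an essential way. Assuming $x \prec y$ we already know $\rk(y) \geq \rk(x) + 1$ by (i), so I would argue by contradiction that $\rk(y) \geq \rk(x) + 2$ is impossible. Under that hypothesis, choose two distinct elements $U_1, U_2 \in h(y) \setminus h(x)$ and set $S := h(x) \cup \{U_1\}$. Since $h(x) \in \mathcal{L}$ is non-empty, so is $S$, and $S \subseteq \mathcal{U}$, whence $S \in \mathcal{L}$; by surjectivity of $h$ there exists $z \in X$ with $h(z) = S$. One then checks $h(x) \subsetneq h(z) \subsetneq h(y)$, the second strict inclusion holding because $U_2 \in h(y) \setminus S$. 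Thus $[z]$ strictly interpolates between $[x]$ and $[y]$ in $X/\sim$, contradicting the covering relation $x \prec y$. Hence $\rk(y) = \rk(x) + 1$.

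Finally, for the maximum, surjectivity produces $x_{\max} \in X$ with $h(x_{\max}) = \mathcal{U}$, which visibly maximises $|h(\cdot)|$; so $r := \max_{X/\sim} \rk$ exists and is attained, and the corank function $|\cdot| := r - \rk$ is then defined automatically. I expect the contradiction step for (ii) to be the only non-routine point in the argument — the rest is bookkeeping — and it is precisely there that dropping surjectivity of $h$ would break the proof, since the would-be interpolating class $[z]$ need not exist and $X/\sim$ can easily fail to be graded.
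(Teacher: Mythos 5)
Your proposal is correct and follows the same overall outline as the paper's proof (condition (i) from strictness of the inclusions, condition (ii) by contradiction using surjectivity, boundedness from finiteness of $\mathcal{U}$), but your treatment of the key step is genuinely better. The paper's argument for (ii) only invokes surjectivity to produce \emph{some} point $a$ with $\rk(x) < \rk(a) < \rk(y)$ and then asserts that $a$ violates the covering relation $x \prec y$; this is a gap, since a point of intermediate rank need not satisfy $h(x) \subseteq h(a) \subseteq h(y)$ and hence need not lie between $x$ and $y$ in the partial order. Your construction repairs exactly this: by taking $S = h(x) \cup \{U_1\}$ with $U_1 \in h(y) \setminus h(x)$ and applying surjectivity to $S \in \mathcal{L}$, you obtain a class $[z]$ that is order-theoretically interpolated between $[x]$ and $[y]$, which is what the contradiction actually requires. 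You also rightly check that $\rk$ descends to $X/\sim$, which the paper omits.

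One point you gloss over, and where your argument in fact diverges from the stated claim: your reasoning gives $r = \max \rk = |h(x_{\max})| = |\mathcal{U}|$, whereas the proposition asserts $r = 2^{|\mathcal{U}|}-1$. The latter is the cardinality of $\mathcal{L}$ itself (the number of non-empty subcollections of $\mathcal{U}$), not the maximal cardinality of an element of $\mathcal{L}$; the two agree only when $|\mathcal{U}| = 1$. This appears to be an error in the paper rather than in your proof, but since your write-up silently proves a different value of $r$ than the one claimed, you should state explicitly that the maximum is $|\mathcal{U}|$ and flag the discrepancy rather than leaving the claimed formula unaddressed.
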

\begin{proof}
The fundamental aspect to keep in mind here is that $h$ is surjective. We immediately notice that, because of the equivalence relation $\sim$ defined in \eqref{Eq:1.3}, $x \leq y$ is actually always a strict inequality, i.e. we always have $x < y$. This implies that, if $x < y$, then $h(x) \subsetneq h(y)$, and thus $\rk(x) < \rk(y)$. Furthermore, since $h$ is surjective, we can say that:\\
1) If $x \leq y \leq z$ implies either $y=x$ or $y=z$, then either $h(x) = h(y) \subseteq h(z)$ or $h(x) \subseteq h(y) = h(z)$.\\
2) By surjectivity, we know that the cardinality of $h$ does not "jump" any value, i.e. if there are $x_1, x_2$ such that $|h(x_1)|=n$ and $|h(x_2)|=t$ (say, with $n \leq t$; the other case can be treated analogously) for some natural numbers $n,k$ (which are certainly $\neq 0$), then there must exist some points $y_1, y_2, ..., y_k$ such that $|h(y_1)|=n+1$, $|h(y_2)|=n+2$, ..., $|h(y_k)|=t-1$.\\
3) By the two observations above, we can conclude that $x \prec y \Rightarrow \rk(x)=\rk(y)+1$. Indeed, suppose that this does not hold. It is clear that we then have $\rk(y)>\rk(x)+1$ (by definition of $\leq$, and by definition of cardinality). But this implies that there exists at least one point $a$ such that $\rk(x) < \rk(a) < \rk(y)$. Consequently, it is not anymore true that whenever $x \leq z \leq y$ implies either $x=z$ or $z=y$, because $z=a$ is a counterexample. This is absurd, and thus the statement above holds.\\
This rank function is bounded above, because $\mathcal{U}$ is finite and we know that (as noted in Section 4 of [1]) $\mathcal{L}$ is the "power collection" without the empty collection, which clearly implies that the maximum cardinality is \footnote{Since $h$ is surjective, there is at least one $x$ such that $h(x)= \mathcal{U}$.} $2^{|\mathcal{U}|}-1$. We therefore conclude that $X/ \sim$ is graded and it has a corank function.
\end{proof}
Thanks to this result, we know that, under its hypothesis, we can apply the theory in Section 2 of [11]. We first need to recall some other facts and notations, for which we always refer to [11]. First of all, the filtrations of $P$ by the corank defined on it are:
\begin{equation}\label{Eq:3.3}
P^k:=\lbrace \, x \in P : |x| \leq k \, \rbrace
\end{equation}
with $k \in \mathbb{N}_0$. Consequently, we have $P^0 \subset P^1 \subset P^2 \subset ...$, and a presheaf $\mathcal{F}$ on $P$ can be defined on each $P^k$ thanks to the inclusions $P^k \hookrightarrow P$. Now, it is possible to compute the higher limits defined by \eqref{Eq:3.1} via some groups, which will be indicated by $HS^p$. A description of this computational method can be found, for instance, in Section 1.1 of [11]. It can be proved that ($\cong$ means 'isomorphic'):
\begin{equation}\label{Eq:3.4}
\lim^p _{\longleftarrow P} \mathcal{F} \cong HS^p(P,\mathcal{F})
\end{equation}
for all $p$. We can also define (see Section 1.4 in [11]) a relative cohomology $HS^p(P_1, P_2, \mathcal{F})$. This allows us to finally define the cellular cochain complex associated to a poset $P$. The groups in the cochain are:
\begin{equation}\label{Eq:3.5}
C^n(P, \mathcal{F}):=HS^n(P^n, P^{n-1}, \mathcal{F})
\end{equation}
while the coboundary maps $\delta^n$ are defined via Lemma 4 in [11]; see also equation (9) in the cited paper. We can thus conclude with the follwing important:
\begin{theorem}\label{Thm:3.1}
Let $(X,f_s)$ be a structured space and consider a presheaf $\mathcal{F}$ on the poset $X/ \sim$. Then, the higher limits defined in \eqref{Eq:3.1} allows us to define the cohomology of  $X/ \sim$ with values in the presheaf $\mathcal{F}$ via \eqref{Eq:3.2}. Moreover, if $h:X \rightarrow \mathcal{L}$ is surjective and the domain $\mathcal{U}$ of $f_s$ is finite (i.e. it contains a finite number of sets), $X/ \sim$ is a graded poset with corank, with the rank defined by $\rk(x):=|h(x)|$ (the cardinality of the collection). It is then possible to define also the cellular cohomology of $X / \sim$, see \eqref{Eq:3.5} and the comments below it. All these cohomologies are called 'structured cohomologies with values in a presheaf' of $X$.
\end{theorem}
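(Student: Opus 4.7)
The plan is to assemble the three assertions of the theorem from results already established in the excerpt, so that no genuinely new construction is required: the statement is mostly a packaging of Theorem \ref{Thm:2.1} and Proposition \ref{Prop:3.1} together with the machinery of [11]. First, I would observe that the first claim follows directly from Theorem \ref{Thm:2.1}: since $X/\sim$ is a poset, it can be regarded as a small category whose objects are the equivalence classes and whose morphisms $[x] \to [y]$ record the relation $[x] \leq [y]$. A presheaf $\mathcal{F}:(X/\sim)^{op} \to \mathbf{Ab}$ is then just a contravariant functor into an abelian category with enough injectives, so $\lim_{\longleftarrow}$ has right derived functors, and \eqref{Eq:3.1}--\eqref{Eq:3.2} yield a well-defined cohomology theory $H^p(X/\sim, \mathcal{F})$.

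For the second claim, I would simply invoke Proposition \ref{Prop:3.1}: under the hypotheses that $h$ is surjective onto $\mathcal{L}$ and $\mathcal{U}$ is finite, that proposition already verifies the two conditions (i) and (ii) of a rank function and produces the bound $r = 2^{|\mathcal{U}|}-1$. The corank is then $|x|:= r - \rk(x)$, as recalled above \eqref{Eq:3.3}, so $X/\sim$ is a graded poset equipped with a corank function.

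For the third claim, the existence of the corank lets me form the filtration $(X/\sim)^k$ of \eqref{Eq:3.3} and feed it into the cellular construction of [11]. I would note two things that need checking in our particular case: that the filtration exhausts $X/\sim$, and that each $(X/\sim)^k$ is finite. Both are immediate consequences of Proposition \ref{Prop:3.1}, since $|X/\sim| \leq 2^{|\mathcal{U}|} - 1 < \infty$; in particular $(X/\sim)^k = X/\sim$ for $k \geq r$. With these checks in place, the groups $HS^p(P^n, P^{n-1}, \mathcal{F})$ are well-defined, the isomorphism \eqref{Eq:3.4} computes the higher limits, and the cellular cochain complex \eqref{Eq:3.5} is obtained with coboundary maps $\delta^n$ supplied by Lemma 4 in [11]. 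Packaging these three verifications produces the desired structured cohomology with values in a presheaf.

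The main (and quite minor) obstacle is conceptual rather than technical: one has to be careful that the presheaf is genuinely defined on the quotient $X/\sim$ rather than on $X$ itself, since distinct points with $x \sim y$ satisfy $h(x)=h(y)$ and would otherwise create ambiguity in the restriction maps. Working with equivalence classes from the outset, as in the statement, removes this subtlety. Beyond this bookkeeping point, the proof is an application of the previously cited theory to the specific poset $X/\sim$, and no new homological argument is needed.
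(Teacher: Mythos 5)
Your proposal is correct and matches the paper's (implicit) argument: the theorem is indeed a packaging of Theorem \ref{Thm:2.1}, Proposition \ref{Prop:3.1}, and the cellular machinery of [11], and the paper itself offers no further proof beyond the surrounding discussion. Your additional checks (finiteness and exhaustion of the filtration, and that $\mathcal{F}$ must live on the quotient rather than on $X$) are sensible refinements but do not change the route.
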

\begin{remark}\label{Rm:3.1}
\normalfont The surjectivity of $h$ implies the existence of at least one element $x \in P$ such that $h(x)= \mathcal{U}$. This implies that there exists at least one maximal element, which is unique when $\bigcap_{U_p \in \mathcal{U}} U_p$ has one and only one element (see also the next Section). Thus, it can be useful to consider also the Folkman complex (see Remark \ref{Rm:2.1}). See also Section 4.2 in [11] and [21].
\end{remark}
\section{Structured coloured (co)homology}
In this section we consider a (co)homology for coloured posets. Our main reference here is [10]. By Definition 1 in the cited paper, we know that a coloured poset is a couple $(P, \mathcal{F})$, where $P$ is a poset with a unique maximal element (i.e. there exists an element $\widetilde{x}$ which is not smaller than any other element in $P$; moreover, this element is required to be unique: the possibility of having more than one maximal element is due to the partial order), and $\mathcal{F}:P \rightarrow \textbf{Mod}_R$ (the category of $R$-modules, for some unital ring $R$) is a covariant functor. $\mathcal{F}$ is called the 'colouring'. We now show a particular case where we can consider a colouring in relation with structured spaces.
\begin{proposition}\label{Prop:4.1}
Let $(X,f_s)$ be a structured space and suppose that 
$$\bigcap_{U_p \in \mathcal{U}} U_p = \lbrace \, \widetilde{x} \, \rbrace$$
(precisely one element). Then, $\widetilde{x}$ is the unique maximal element of the poset $X/ \sim$ (under $\leq$).
\end{proposition}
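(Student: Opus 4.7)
The plan is to unpack the definitions of $h$ and $\leq$ and show that the hypothesis forces $h(\widetilde{x})$ to be the top element of the containment lattice on subcollections of $\mathcal{U}$, which makes $[\widetilde{x}]$ the greatest (hence a unique maximal) element of $X/\sim$.

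First I would compute $h(\widetilde{x})$. Since $\widetilde{x} \in \bigcap_{U_p \in \mathcal{U}} U_p$, by the very definition of $h$ in \eqref{Eq:1.1} we have
\begin{equation*}
h(\widetilde{x}) = \{\, U_p \in \mathcal{U} : \widetilde{x} \in U_p \,\} = \mathcal{U}.
\end{equation*}
Hence for every $x \in X$ one has $h(x) \subseteq \mathcal{U} = h(\widetilde{x})$, and by \eqref{Eq:1.2} this gives $x \leq \widetilde{x}$ (with the understanding that $x \leq y$ in $X/\sim$ is to be read between the respective equivalence classes). This already shows that $[\widetilde{x}]$ is a maximal element, and in fact the greatest element, of $X/\sim$.

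Next I would handle uniqueness. Suppose $[y]$ is any maximal element of $X/\sim$. Since $[\widetilde{x}]$ is greatest, we have $[y] \leq [\widetilde{x}]$. Maximality of $[y]$ together with the fact (noted right after \eqref{Eq:1.3} in the paper) that $\leq$ on $X/\sim$ is really a strict order between distinct classes forces $[y] = [\widetilde{x}]$, i.e.\ $h(y) = h(\widetilde{x}) = \mathcal{U}$. Unwinding the definition of $h$, this means $y \in U_p$ for every $U_p \in \mathcal{U}$, so $y \in \bigcap_{U_p \in \mathcal{U}} U_p = \{\widetilde{x}\}$, giving $y = \widetilde{x}$.

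The argument is essentially bookkeeping with definitions; I do not expect a genuine obstacle. The only point one has to be slightly careful about is the transition between $X$ and $X/\sim$: the hypothesis is phrased in $X$, while the conclusion about the maximal element lives in $X/\sim$. The singleton assumption on $\bigcap U_p$ is in fact stronger than what is needed just for maximality of $[\widetilde{x}]$ (for which $\widetilde{x} \in \bigcap U_p$ would already suffice); the role of the singleton hypothesis is to pin down the maximal element uniquely also as an element of $X$, not merely as a class in $X/\sim$.
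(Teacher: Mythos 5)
Your proposal is correct and follows essentially the same route as the paper: compute $h(\widetilde{x})=\mathcal{U}$ from the hypothesis, observe that every other class has $h(y)\subsetneq\mathcal{U}$ so that $[\widetilde{x}]$ sits above everything, and conclude maximality together with uniqueness from the fact that only $\widetilde{x}$ can have $h$-value $\mathcal{U}$. Your version is slightly more careful than the paper's (explicitly noting that $[\widetilde{x}]$ is in fact the greatest element and tracking the passage between $X$ and $X/\sim$), but the underlying argument is the same.
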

\begin{proof}
Since 
$$\bigcap_{U_p \in \mathcal{U}} U_p = \lbrace \, \widetilde{x} \, \rbrace$$
we know that $h(\widetilde{x})= \mathcal{U}$ (because it intersects all the fixed neighborhoods). Moreover, it is clear that there is no other element in $X/ \sim$ such that this happens. The fact that $h(y) \subsetneq h(\widetilde{x})$ $\forall y \in X/ \sim$ then implies that $\widetilde{x}$ is a maximal element, and the uniqueness follows from the observation above.
\end{proof}
Of course, there are also other cases for which a unique maximal element exists. However, here we will mainly consider the above one. We can thus define:
\begin{definition}\label{Def:4.1}
Given a structured space $(X,f_s)$, the couple $(X/ \sim, \mathcal{F})$, where $\mathcal{F}: X/ \sim \,  \rightarrow \textbf{Mod}_R$ is a covariant functor, and $X/ \sim$ has a unique maximal element, is a structured coloured poset. A 'maximal structured coloured poset' is a couple $(X/ \sim, \mathcal{F})$ where
$$\bigcap_{U_p \in \mathcal{U}} U_p = \lbrace \, \widetilde{x} \, \rbrace$$
which is a particular case of structured coloured poset by Proposition \ref{Prop:4.1}.
\end{definition}
The name 'maximal' comes from the fact that $|h(\widetilde{x})|=|\mathcal{U}|$ has the maximum cardinality among all the collections $h(y)$ for $y \in X$. The unique maximal element $\widetilde{x}$ of a structured coloured poset will be indicated, as usual in this context, by $1$.\\
Following [10], if $(P, \mathcal{F})$ is a coloured poset, we can define the chain complex ($n >0$):
\begin{equation}\label{Eq:4.1}
S_n(P, \mathcal{F}):= \bigoplus_{x_1 x_2 \cdot \cdot \cdot x_n, \, x_i \in P \setminus 1} \mathcal{F}(x_1)
\end{equation} 
and $S_0(P, \mathcal{F}):=\mathcal{F}(1)$, where $\textbf{x}=x_1 x_2 \cdot \cdot \cdot x_n$ means $x_1 \leq x_2 \leq ... \leq x_n$. For $n>0$, an element of the chain complex can be written as:
$$ \sum_{\textbf{x}} \lambda \cdot \textbf{x}$$
where the sum is over the sequences \textbf{x} of lenght $n$, and $\lambda \in \mathcal{F}(x_1)$. The differentials $d_n: S_n(P, \mathcal{F}) \rightarrow S_{n-1}(P, \mathcal{F})$ are defined as follows ($n>1$):
\begin{equation}\label{Eq:4.2}
d_n(\lambda x_1 x_2 \cdot \cdot \cdot x_n):= \mathcal{F}^{x_2} _{x_1}(\lambda) x_2 \cdot \cdot \cdot x_n - \sum_{i=2}^{n} (-1)^i \lambda x_1 \cdot \cdot \cdot \widehat{x}_i \cdot \cdot \cdot x_n
\end{equation}
and $d_1(\lambda x):=\mathcal{F}^1 _x (\lambda)$. Something similar can be done using strict inequalities $<$ instead of $\leq$. The resulting complex is denoted by $C_n(P, \mathcal{F})$. In [10] various results involving also this complex are obtained. Here, our main interest was to show that this construction also applies to some structured spaces, so we will not go deeper. We can define a cohomology via the usual application of the functor $\Hom(\cdot, R)$ (where $R$ is the same as before). We then have:
\begin{theorem}\label{Thm:4.1}
Let $(X,f_s)$ be a structured space. Suppose that $X / \sim$ has a unique maximal element, indicated by $1$ (see, for instance, maximal structured coloured posets) and let $\mathcal{F}: X/ \sim \, \rightarrow \textbf{Mod}_R$ be a covariant functor ($R$ is some unital ring). Then, $(X,\mathcal{F})$ is a structured coloured poset to which we can associate a coloured homology theory (see \eqref{Eq:4.1} and \eqref{Eq:4.2} with $P=X/ \sim$) and a cohomology theory obtained by applying to the homology, as usual, the functor $\Hom( \cdot, R)$.
\end{theorem}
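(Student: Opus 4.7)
The plan is to reduce the statement to a direct application of the coloured (co)homology machinery of [10], after checking that the data $(X/\sim, \mathcal{F})$ fit the definition of a coloured poset used there. First I would verify the structural hypotheses: by Proposition 4.1 (or by the standing assumption of the theorem) $X/\sim$ is a poset under $\leq$ with a unique maximal element $1$, and by hypothesis $\mathcal{F}:X/\sim \,\to \textbf{Mod}_R$ is a covariant functor. This is exactly the data required by Definition 1 of [10], so $(X/\sim,\mathcal{F})$ is a coloured poset, and in particular, in the case where $\bigcap_{U_p \in \mathcal{U}} U_p = \{\widetilde{x}\}$, it is the maximal structured coloured poset of Definition \ref{Def:4.1}.

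Next I would set up the chain complex. For each $n > 0$ we form $S_n(X/\sim,\mathcal{F})$ via \eqref{Eq:4.1}, indexed by non-strictly increasing sequences $x_1 \leq x_2 \leq \cdots \leq x_n$ in $(X/\sim)\setminus\{1\}$, and put $S_0(X/\sim,\mathcal{F}):=\mathcal{F}(1)$. The maps $\mathcal{F}^{x_2}_{x_1}$ and $\mathcal{F}^{1}_x$ that appear in the differential \eqref{Eq:4.2} are well-defined: whenever $x_1\leq x_2$ (respectively $x\leq 1$) the functoriality of $\mathcal{F}$ provides a morphism of $R$-modules $\mathcal{F}(x_1)\to\mathcal{F}(x_2)$ (respectively $\mathcal{F}(x)\to\mathcal{F}(1)$). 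Hence the formulas \eqref{Eq:4.1} and \eqref{Eq:4.2} define honest $R$-module homomorphisms, and no additional ingredient beyond what the poset $X/\sim$ and the functor $\mathcal{F}$ already supply is needed.

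The main verification is then that $(S_\bullet(X/\sim,\mathcal{F}),d_\bullet)$ is a chain complex, i.e. $d_{n-1}\circ d_n = 0$. This is exactly the content of the analogous check for general coloured posets in [10]: the two contributions in $d_n$ split into a "functorial" part (involving $\mathcal{F}^{x_2}_{x_1}$) and a "simplicial" part (the alternating sum with hat-deleted vertices), and on composing these the simplicial cancellation argument familiar from the order complex (cf. Section 2) together with the functoriality identity $\mathcal{F}^{x_3}_{x_1} = \mathcal{F}^{x_3}_{x_2}\circ\mathcal{F}^{x_2}_{x_1}$ makes the remaining cross-terms cancel in pairs. Since $X/\sim$ satisfies no extra hypotheses beyond being a poset with unique maximal element, the combinatorial identities proved in [10] apply verbatim; the only step that is \emph{not} formal is precisely this sign/functoriality cancellation, but it is exactly what [10] establishes. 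The strict-inequality variant $C_n(X/\sim,\mathcal{F})$ is handled identically.

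Finally, to pass to cohomology I would apply the contravariant functor $\Hom(\cdot,R)$ termwise to the chain complex, obtaining cochain groups $\Hom(S_n(X/\sim,\mathcal{F}),R)$ with coboundary maps $d_n^*$. The relation $d_n\circ d_{n+1}=0$ dualises to $d_{n+1}^*\circ d_n^* = 0$, so this is a well-defined cochain complex and its cohomology furnishes the desired structured coloured cohomology of $X$. The potentially delicate step, as indicated above, is making sure that the bookkeeping of \eqref{Eq:4.2} is compatible with the functoriality of $\mathcal{F}$ so that $d^2=0$; but this is inherited directly from [10] and requires no modification once the identification of $(X/\sim,\mathcal{F})$ as a coloured poset has been made.
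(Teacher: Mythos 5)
Your proposal is correct and follows the same route the paper takes (the paper states Theorem \ref{Thm:4.1} as a summary of the preceding discussion, with the argument left implicit): one checks that $(X/\sim,\mathcal{F})$ satisfies Definition 1 of [10] using the assumed unique maximal element and the covariance of $\mathcal{F}$, then imports the chain complex \eqref{Eq:4.1}--\eqref{Eq:4.2} and the $d^2=0$ verification from [10], and finally dualises with $\Hom(\cdot,R)$. Your write-up simply makes explicit the well-definedness of $\mathcal{F}^{x_2}_{x_1}$ and the cancellation argument that the paper defers entirely to [10].
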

\section{Structured stratified spaces}
We conclude this paper with two (co)homologies for a particular kind of poset-stratified spaces, called 'structured stratified spaces'. Recall that (see, for instance, [25-30]) a poset-stratified space is a structure consisting of a topological space $X$, a poset $P$ with the Alexandroff topology and a continuous surjection $s:X \rightarrow P$. Our previous discussions suggest an interesting case from the point of view of the theory of structured spaces: to consider a structured space $X$ and the corresponding poset $X/ \sim$ endowed with the Alexandroff topology.
\begin{definition}\label{Def:5.1}
A structured stratified space consists of a structured space $X$ endowed with the topology in Proposition 1.1 in [1] (more precisely, the one constructed in Example 1.1 of such paper), its corresponding poset $X/ \sim$ endowed with the Alexandroff topology, and a continuous surjection $s:X \rightarrow X/ \sim$.
\end{definition}
We can apply the theory of stratified spaces to structured spaces thanks to the above definition; this gives another tool to study these spaces. We briefly sketch some possible directions to do this, even though we will soon return to the main topic of this paper. Following [25], it is clear (by Definition 4.2.1) that structured stratified spaces are indeed a particular case of stratified spaces. By Construction 4.2.3, we can endow the structured space $X$ with a preorder given by the considered map $s$ as follows:
$$ x \leq_s y \Leftrightarrow sx \leq sy \, \text{in} \, X/ \sim $$
Consequently, by 5.1.7 we can define a precicurlation $\leq_s|_{\bullet}$ on the preordered space $X$ via restrictions. This implies that $(X, \leq_s|_{\bullet})$ is a prestream (see Definition 5.1.1). We will call it the 'prestream associated to the structured stratified space'. It is also possible to consider other precirculations for structured stratified spaces, which could turn out to make the prestream into a stream. There are various possible definitions of this concept: Definition 5.1.14 involves Haucourt streams (see also [27]); another definition, which can be found in Remark 5.1.19, is given by Krishnan in [28]. We can also define the 'd-space associated to a structured stratified space', which is the d-space $(X,d^{\leq_s|_{\bullet}}X)$ defined as in 5.1.11 (see 5.1.10  for the definition of d-space).\\
We now return to our main topic: we first give a "standard" example of structured stratified space. In fact, the following example will be called 'standard structured stratified space associated to a structured space', and we will always refer to it if not specified otherwise.
\begin{proposition}\label{Prop:5.1}
Let $X$ be a structured space endowed with the topology in Proposition 1.1 (more precisely, Example 1.1) of [1], and let $X/ \sim$ be its corresponding poset endowed with the Alexandroff topology. Moreover, suppose that $\sup_{x \in X} |h(x)| < +\infty$ \footnote{The cardinality of a collection of sets is defined similarly to sets; see, for instance, Section 4 in [2].}. Then, $X$, $X/ \sim$ and the map $s:X \rightarrow X/ \sim$ given by $s(x):=[x]$ form a structured stratified space. 
\end{proposition}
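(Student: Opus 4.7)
The plan is to verify the three ingredients required by Definition~5.1 for a structured stratified space: that the map $s : X \to X/ \sim$ defined by $s(x):=[x]$ is well defined, surjective, and continuous when $X$ carries the topology of Example~1.1 in [1] and $X/ \sim$ carries the Alexandroff topology. The first two points are automatic, since $s$ is nothing but the canonical projection onto the quotient set by the equivalence relation $\sim$ from \eqref{Eq:1.3}, and every equivalence class contains at least one representative by construction. All the substance of the statement therefore lies in checking continuity of $s$.

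For continuity I would use the standard fact that, in the Alexandroff topology on a poset, a subbasis of the open sets is given by the principal up-sets $\uparrow [y] = \{[z] \in X/ \sim \, : \, [y] \leq [z]\}$ (the dual convention, with principal down-sets, would lead to an entirely symmetric argument). It then suffices to show that $s^{-1}(\uparrow [y])$ is open in $X$ for every $y \in X$. Unfolding \eqref{Eq:1.2} and using the defining formula \eqref{Eq:1.1} for $h$, one computes
\[ s^{-1}(\uparrow [y]) \; = \; \{ x \in X \, : \, h(y) \subseteq h(x)\} \; = \; \bigcap_{U_p \in h(y)} U_p, \]
where the second equality holds because $h(y) \subseteq h(x)$ says exactly that $x$ lies in every $U_p \in \mathcal{U}$ which contains $y$, and conversely any $x$ in all such $U_p$ automatically satisfies $h(y) \subseteq h(x)$ by definition of $h$.

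The remaining step, and the only real obstacle, is to justify that this intersection is open in the topology constructed in Example~1.1 of [1]. This is precisely where the hypothesis $\sup_{x \in X} |h(x)| < +\infty$ enters: it guarantees that every $h(y)$, being a subcollection of $\mathcal{U}$, is finite, so that $\bigcap_{U_p \in h(y)} U_p$ is a \emph{finite} intersection of members of $\mathcal{U}$. Since the topology of Example~1.1 is the one generated by $\mathcal{U}$, each $U_p$ is open and finite intersections of open sets remain open, so $s^{-1}(\uparrow [y])$ is open in $X$. This establishes continuity on a subbasis and hence on all open sets, completing the verification that $(X, X/ \sim, s)$ is a structured stratified space. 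Without the finiteness hypothesis the intersection could be arbitrary, and there would be no general reason for it to stay open, which explains why the assumption is exactly the right one for this construction.
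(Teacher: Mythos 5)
Your proposal is correct and follows essentially the same route as the paper's proof: both reduce continuity to the observation that Alexandroff-open sets are up-closed, identify the preimage of an up-set with an intersection of the sets $U_p \in h(y)$, and invoke the hypothesis $\sup_{x\in X}|h(x)|<+\infty$ to make that intersection finite and hence open in the topology generated by $\mathcal{U}$. Your reduction to the basis of principal up-sets $\uparrow[y]$ and the clean identity $s^{-1}(\uparrow[y])=\bigcap_{U_p\in h(y)}U_p$ is a tidier packaging of the same argument the paper carries out for an arbitrary open set via unions of equivalence classes.
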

\begin{proof}
We only need to verify that $s$ is continuous and surjective. Surjectivity is clear, since all the elements $[x]$ in $X/ \sim$ are reached at least by $x \in X$. In order to prove continuity, recall that (see, for instance, Definition 4.1.13 in [25]), by definition of Alexandroff topology, if $U \subseteq X/ \sim$ is open, then whenever $p \in U$ and $t \in X/ \sim$, with $p \leq t$, we have $t \in U$. Now consider some open $U$ in $X/ \sim$; we have $s^{-1}(U)=\lbrace x \in X: s(x) \in U \rbrace$. By definition of the equivalence classes in $X/ \sim$, we know that all the elements $x \in X$ such that $s(x)=[x]$ satisfy the following property:
$$h(x)=\lbrace U_p \rbrace_{\text{for some} \, U_p \in \mathcal{U}}$$
for some fixed $U_p$'s in $\mathcal{U}$ (they are precisely the same for all these $x$, and no other $y \in X$ is such that $h(y)$ is equal to the above collection). This means that all the points $x$ with the same equivalence class form a set $A=\bigcap_p U_p \setminus (\bigcup_t U_t)$, where the intersection is over the above $U_p$'s, while the $U_t$'s are sets not belonging to $h(x)$ but which intersect with the set $\bigcap_p U_p$. However, since an open set $U \subseteq X/ \sim$ satisfies the property previously noticed (thanks to Alexandroff topology), we know that $s^{-1}(U)$ will be precisely the union of some intersections as the above one, because the gaps due to the differences of sets will be filled by the points $y$ such that $x \leq y$ (which belong to $U$). Consequently, $s^{-1}(U)$ is a union of open sets, since (under the assumption that the number of open sets in all the considered interesections is finite) the intersections are open. The Proposition follows.
\end{proof}
Now, following [26], to each structured stratified space $X$ we can associate a simplicial set, called 'stratified singular simplicial set of $X$'. This set is denoted by $SS(X)$; see Definition 7.1.0.3. We can evaluate the homology of this simplicial set and associate it to the structured stratified space (also recall that this homology is isomorphic to the singular homology of the geometric realisation of the simplicial set). More precisely, we can associate to a simplicial set $Y$ the chain:
$$ C_n= \mathbb{Z}[Y_n] $$
(the free abelian group on $Y_n$) and the boundary maps:
$$ \sum_i (-1)^i d_i : C_n \rightarrow C_{n-1} $$
(the alternating sum of the face maps). Another possible homology can be obtained choosing some simplicial set $A$ and considering the homology of the simplicial set $\emph{\textbf{sset}}(A, SS(X))$ ($X$ structured stratified space); see also Lemma 7.3.0.1. We can obtain cohomologies in the usual way. We thus conclude, summing up what we have said up to now:
\begin{theorem}[(Co)homology for structured stratified spaces]\label{Thm:5.1}
Let $(X, X \xrightarrow{s} X/ \sim)$ be a structured stratified space. We can define at least two (co)homologies for such a space: one is given by the (co)homology of the simplicial set $SS(X)$, while another one is given by the (co)homology of the simplicial set $\emph{\textbf{sset}}(A, SS(X))$ (after choosing some simplicial set $A$).
\end{theorem}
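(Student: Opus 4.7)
The proof is essentially an assembly result: we need to check that the two constructions already available in the literature for poset-stratified spaces apply without modification to structured stratified spaces, and then verify the standard chain-complex axioms so that the (co)homology is well-defined.

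The plan is as follows. First, I would observe that, by Definition \ref{Def:5.1}, a structured stratified space $(X, s: X \rightarrow X/\sim)$ is in particular a poset-stratified space in the sense of [25--30], because $X/\sim$ is endowed with the Alexandroff topology and $s$ is a continuous surjection. Consequently, every construction performed in [26] for poset-stratified spaces can be applied verbatim; in particular Definition 7.1.0.3 of [26] assigns to $X$ the stratified singular simplicial set $SS(X)$ without any further hypothesis. This step is immediate but logically necessary, since it is what licenses the transfer of the theory.

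Next, given the simplicial set $SS(X)$, I would define the chain complex $C_n = \mathbb{Z}[SS(X)_n]$ as the free abelian group on the $n$-simplices, with boundary $\partial_n = \sum_{i=0}^n (-1)^i d_i$. The identity $\partial_{n-1} \circ \partial_n = 0$ is the standard simplicial identity and follows from the simplicial relations among the face maps $d_i$ of $SS(X)$; since $SS(X)$ is a bona fide simplicial set, these relations hold automatically. The homology groups $H_n(X) := \ker \partial_n / \operatorname{Im} \partial_{n+1}$ are therefore well-defined, and applying $\operatorname{Hom}(\cdot, R)$ in the standard way produces the corresponding cohomology. For the second theory, I would choose any simplicial set $A$ and consider $\textbf{sset}(A, SS(X))$, which by Lemma 7.3.0.1 of [26] is again a simplicial set; then the same construction $C_n = \mathbb{Z}[\textbf{sset}(A, SS(X))_n]$ with the alternating face-map boundary yields a chain complex, and Hom with $R$ yields the cohomology.

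Finally, I would remark that both theories are functorial with respect to morphisms of structured stratified spaces, since $SS(\cdot)$ and $\textbf{sset}(A, -)$ are functorial on the category of (poset-)stratified spaces. The main obstacle, although minor, is really just a notational one: one must confirm that nothing in the passage from a general poset-stratified space to our specific case (where the poset is $X/\sim$) breaks the constructions of [26]; but since $X/\sim$ is a genuine poset and $s$ is continuous and surjective by Proposition \ref{Prop:5.1} (or by hypothesis in Definition \ref{Def:5.1}), there is no obstruction. The theorem then follows by collecting the observations above.
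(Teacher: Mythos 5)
Your proposal matches the paper's approach: the paper gives no separate proof of Theorem \ref{Thm:5.1}, instead treating it as a summary of the preceding paragraph, which observes exactly as you do that a structured stratified space is a poset-stratified space, invokes Definition 7.1.0.3 and Lemma 7.3.0.1 of [26] to obtain the simplicial sets $SS(X)$ and $\textbf{sset}(A, SS(X))$, and takes the free abelian groups with the alternating sum of face maps as the chain complex. Your added remarks on $\partial\circ\partial=0$ and functoriality are harmless elaborations of the same argument.
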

Everything clearly specialises to the particular case of the standard structured stratified space:
\begin{theorem}[Structured stratified (co)homology]\label{Thm:5.2}
Let $X$ be a structured space and consider the standard structured stratified space associated to it. Then, a structured stratified (co)homology of $X$ is defined to be one of the possible (co)homologies of $(X, X \xrightarrow{s} X/ \sim)$.
\end{theorem}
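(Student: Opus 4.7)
The plan is to observe that Theorem~\ref{Thm:5.2} is essentially a specialization of Theorem~\ref{Thm:5.1} to the canonical stratification produced by Proposition~\ref{Prop:5.1}, so the work consists in stitching these two results together rather than developing new machinery. First, I would invoke Proposition~\ref{Prop:5.1}: assuming the finiteness condition $\sup_{x \in X} |h(x)| < +\infty$ that underlies the ``standard'' construction, the map $s:X \to X/\sim$ defined by $s(x):=[x]$ is surjective and continuous with respect to the Alexandroff topology on $X/\sim$, so the triple $(X, X/\sim, s)$ is, by Definition~\ref{Def:5.1}, a structured stratified space.

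Next, with this triple in hand, Theorem~\ref{Thm:5.1} produces the two candidate (co)homology theories directly, since its hypothesis is exactly the data of a structured stratified space. The first is the (co)homology of the simplicial set $SS(X)$, with chain groups $C_n = \mathbb{Z}[SS(X)_n]$ and alternating boundary maps; the second is the (co)homology of $\textbf{sset}(A, SS(X))$ for a chosen simplicial set $A$, as in the discussion preceding Theorem~\ref{Thm:5.1}. Because these constructions depend only on the stratified structure $(X, s)$ and not on any additional choices, they transport without modification to the standard case, yielding the ``structured stratified (co)homology'' of $X$.

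Since the statement is a definitional corollary, I do not expect a substantive obstacle; the only point requiring care is making sure the finiteness assumption of Proposition~\ref{Prop:5.1} is indeed enough to guarantee that $SS(X)$ (and hence the associated chain complex) is a well-defined simplicial set whose (co)homology is a genuine invariant of $X$. If one wished to handle more general structured spaces, the hypothesis on $\sup_{x \in X} |h(x)|$ could be relaxed at the cost of replacing $s$ with another continuous surjection onto $X/\sim$; the proof would then be identical, as Theorem~\ref{Thm:5.1} is agnostic to how the stratifying map is produced.
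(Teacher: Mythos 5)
Your proposal is correct and matches the paper's intent exactly: the paper gives no separate proof of Theorem~\ref{Thm:5.2}, simply noting that "everything clearly specialises" to the standard case, which is precisely your argument of feeding the output of Proposition~\ref{Prop:5.1} into Theorem~\ref{Thm:5.1}. Your additional remarks on the finiteness hypothesis and on relaxing it are sensible but go beyond what the paper records.
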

\section{Conclusion}
In this paper we have developed other (co)homology theories for structured spaces arising, this time, from their corresponding poset. The methods used can be found in the cited references; the main aim of this paper is to notice that these theories can also be applied (under some hypothesis) to the case of structured spaces using the poset associated via $\sim$ and $h$.\\
\\
\begin{large}
\textbf{References}
\end{large}
\\
$[1]$ Norman, M. (2020). On structured spaces and their properties. Preprint (arXiv:2003.09240)\\
$[2]$ Norman, M. (2020). Two cohomology theories for structured spaces. Preprint (arXiv:2004.11152)\\
$[3]$ Norman, M. (2020). Structured spaces: categories, sheaves, bundles, schemes and cohomology theories. In preparation\\
$[4]$ Everitt, B.; Turner, P. (2012). Bundles of coloured posets and a Leray-Serre spectral sequence for Khovanov homology. Transactions of the American Mathematical Society, 364(6), 3137-3158\\
$[5]$ Brun, M.; Bruns, W.; R\"omer, T. (2007). Cohomology of partially ordered sets and local cohomology of section rings. Advances in Mathematics, 208(1): 210-235\\
$[6]$ D\'iaz, A. (2007). A method for integral cohomology of posets. Preprint (arXiv:0706.2118)\\
$[7]$ Cianci, N.; Ottina, M. (2017). A new spectral sequence for homology of posets. Topology and its Applications, Volume 217, 1-19\\
$[8]$ Ramos, A. D. (2006). Homological algebra on graded posets. PhD thesis. University of Malaga\\
$[9]$ Wachs, M. L. (2007). Poset topology: tools and applications. Geometric Combinatorics, IAS/Park City Math. Series 13, (Miller, Reiner, and Sturmfels, eds.), American Mathematical Society, Providence, RI,497-615\\
$[10]$ Everitt, B.; Turner, P. (2009). Homology of coloured posets: A generalisation of Khovanov's cube construction. Journal of Algebra, Vol. 322, No. 2, 429-448\\
$[11]$ Everitt, B.; Turner, P. (2015). Cellular cohomology of posets with local coefficients. Journal of Algebra, Vol. 439, 134-158\\
$[12]$ Cianci, N.; Ottina, M. (2019). Homology of posets with functor coefficients and its relation to Khovanov homology of knots. Preprint (arXiv:1907.03974)\\
$[13]$ Quillen, D. (1978). Homotopy properties of the poset of nontrivial p-subgroups of a group. Advances in Mathematics 28, 2, 101-128\\
$[14]$ Deo, S. (2018). Algebraic Topology. Texts and Readings in Mathematics, vol 27. Springer, Singapore\\
$[15]$ Gallier, J.; Quaintance, J. (2019). A Gentle Introduction to Homology, Cohomology, and
Sheaf Cohomology, University of Pennsylvania\\
$[16]$ Gabriel, P.; Zisman, M. (1967). Calculus of fractions and homotopy theory. Ergebnisse der Mathematik und ihrer Grenzgebiete, Band 35. Springer-Verlag New York, Inc., New York\\
$[17]$ Bj\"orner, A. (1995). Topological methods, Handbook of combinatorics, Vol. 1, 2, Elsevier, Amsterdam, 1819-1872\\
$[18]$ Orlik, P. (1989). Introduction to Arrangements. CBMS Regional Conference Series in Mathematics, vol. 72\\
$[19]$ Folkman, J. (1966) The homology groups of a lattice. J. Math. Mech. 15, 631-636\\
$[20]$ Orlik, P.; Terao, H. (1992). Arrangements of Hyperplanes. Grundlehren der mathematischen Wissenschaften, vol 300. Springer-Verlag Berlin Heidelberg\\
$[21]$ Boussicault, A. (2013). Operations on partially ordered sets and rational identities of type A. Discret. Math. Theor. Comput. Sci., Vol 15, No 2, 13-32\\
$[22]$ Norman, M. (2020). A (co)homology theory for some preordered topological spaces. Preprint (arXiv:2004.10689)\\
$[23]$ Haine, P. J. (2018). On the homotopy theory of stratified spaces. Preprint (arXiv: 1811.01119)\\
$[24]$ Chandler, A.; Sazdanovic, R. (2019). A Broken Circuit Model for Chromatic Homology Theories. Preprint (arXiv:1911.13226)\\
$[25]$ Nicotra, S. (2020). A convenient category of locally stratified spaces. PhD thesis, University of Liverpool\\
$[26]$ Nand-Lal, S. J. (2019). A simplicial approach to stratified homotopy theory. PhD thesis, University of Liverpool\\
$[27]$ Haucourt, E. (2012). Streams, d-spaces and their fundamental categories. Electronic Notes in Theoretical Computer Science 283, 111-151\\
$[28]$ Krishnan, S. (2009). A convenient category of locally preordered spaces. Applied Categorical Structures 17.5, 445-466\\
$[29]$ Yokura, S. (2017). Decompostion spaces and poset-stratified spaces. Preprint (arXiv:1912.00339)\\
$[30]$ Goubault-Larrecq, J. (2014). Exponentiable streams and prestreams. Applied Categorical Structures 22.3, 515-549

\end{document}